\documentclass[reqno, 11pt]{amsart}
\usepackage{amsmath,mathtools}
 \usepackage{amssymb}
\usepackage{amsthm}
\usepackage{amsmath}
\usepackage{times}
\usepackage{latexsym}
\usepackage[mathscr]{eucal}
\usepackage{graphicx}
\usepackage{float}

\numberwithin{equation}{section}
 
  \newtheorem{theorem}{Theorem}[section]
  \newtheorem{proposition}[theorem]{Proposition}
  
  \newtheorem{corollary}[theorem]{Corollary}

  \newtheorem{definition}[theorem]{Definition}
  \newtheorem{example}[theorem]{Example}
  \newtheorem{question}[theorem]{Question}

\title[The two classes of lightlike hypersurfaces]{On the structure vector field in lightlike hypersurfaces }
\author[Samuel Ssekajja]{Samuel Ssekajja}
\newcommand{\acr}{\newline\indent}
\address{ School of Mathematics\acr
 University of the Witwatersrand\acr
 Private Bag 3, Wits 2050\acr
South Africa}
\email{samuel.ssekajja@wits.ac.za} 
\thanks{}
\subjclass[2010]{Primary 53C25; Secondary 53C40, 53C50}

\keywords{Lightlike hypersurfaces, Ascreen hypersurfaces, Inascreen hypersurfaces}

\begin{document}

\begin{abstract}
We study lightlike hypersurfaces of an indefinite almost contact metric-manifold $\bar{M}$. We prove that there are only two types of such hypersurfaces, known as ascreen and inascreen, with respect to the position of the structure vector field of $\bar{M}$. We also show that the second class of hypersurfaces naturally admits an almost Hermitian structure.

\end{abstract}
\maketitle
\section{Introduction} 
 Lightlike hypersurfaces, $M$, of indefinite almost contact metric manifolds, $\bar{M}$,  have been studied by many authors. Among those are the following \cite{Calin2}, \cite{Duggal6}, \cite{Duggal4}, \cite{Jin4}, \cite{Jin1}, \cite{Jin2}, \cite{Jin3}, \cite{Kang} and \cite{Massamba1}. A lot of research has been done on such hypersurfaces to-date, and most of the work is on {\it tangential} hypersurfaces, that is; those which are tangent to the structure vector field of $\bar{M}$. Although it is relatively easy to study tangential lightlike hypersurfaces, it has been shown that most of the well-known lightlike hypersurfaces are non-existent in this case. For example, these hypersurfaces can not be totally umbilical (although they have been assumed to be totally umbilical in the paper \cite{Kang}), totally screen umbilical or screen conformal. This shows that the position of the structure vector field relative to each hypersurface has a great impact on the underlying geometry. In an effort to extend the study of lightlike hypersurfaces to non-tangential ones, D. H. Jin introduced a new class of lightlike hypersurfaces and named it {\it ascreen} \cite{Jin4, Jin1, Jin2, Jin3}. In this class of hypersurfaces, the structure vector field is nowhere tangent to the hypersurface. In fact, this vector field lies in the orthogonal complement of the screen distribution,  over $M$, in the ambient space. Some results have been proved regarding this class of hypersurfaces. Some of those results can be seen in the above articles. A natural question arises here;
 \begin{question}
 	Given a lightlike hypersurface $M$ of an indefinite almost contact metric-manifold $\bar{M}$.  Is it possible to precisely locate the position of the structure vector field of $\bar{M}$ relative to $M$?  
 \end{question}
 Our answer to this question is affirmative. In fact, we prove, in Theorem \ref{c8}, that there are only two types of such hypersurfaces, i.e. the ascreen and what we have named {\it inascreen}. The second class (inascreen) also includes the well-down tangential lightlike hypersurfaces. Finally, we prove that this class of hypersurfaces admits an an almost Hermitian structure (see Theorem \ref{c30}).

\section{Preliminaries} \label{pre}
An odd-dimensional semi-Riemannian manifold $(\bar{M},\bar{g})$ is called an almost contact metric-manifold \cite{Takahashi, Tanno} if there are a $(1, 1)$ tensor field $\bar{\phi}$, a vector field $\zeta$ , called structure vector field, and a 1-form $\eta$ such that
\begin{align}
\eta(\zeta)=1\quad \mbox{and} \quad \bar{\phi}^{2}=-I+\eta \otimes \zeta,\label{p1}
\end{align}
where $I$ denotes the identity transformation. From (\ref{p1}), we have 
\begin{align}\label{bg1}
\bar{\phi}\zeta=0 \quad \mbox{and}\quad \eta\circ \bar{\phi}=0. 
\end{align}
Futhermore, on an almost contact manifold $\bar{M}$ the following holds
\begin{align}\label{p2}
\bar{g}(\bar{\phi}X,\bar{\phi}Y)&=\bar{g}(X,Y)-\eta(X)\eta(Y),
\end{align}
for any $X$ and $Y$ tangent to $\bar{M}$. It follows from (\ref{p1}) and (\ref{p2}) that 
\begin{align}\label{ou}
\bar{g} (X,\zeta)=\eta(X).
\end{align}
We can, also,  see from (\ref{p1}), (\ref{bg1}) and (\ref{p2}) that $\bar{\phi}$ is skew-symmetric with respect to $\bar{g}$, i.e. 
\begin{align}\label{bg2}
	\bar{g}(\bar{\phi}X,Y)=-\bar{g}(X,\bar{\phi}Y),
\end{align}
 for any $X$ and $Y$ tangent to $\bar{M}$.

Let $(\bar{M}, \bar{g})$ be a semi-Riemannian manifold, and let $(M,g)$ be a hypersurface of $\bar{M}$, where $g=\bar{g}|_{M}$ is the induced metric tensor on $M$. We call $M$ a lightlike hypersurface if the normal bundle $TM^{\perp}$ of $M$ is a vector subbundle of the tangent bundle $TM$, of rank 1. Moreover, it is known \cite{Duggal5, Duggal6} that the complementary bundle to $TM^{\perp}$ in $TM$, called the screen distribution and denoted by $S(TM)$, is non-degenerate and the following decomposition holds; 
 \begin{align}\label{n97}
TM=S(TM)\perp TM^{\perp},
\end{align}
where $\perp$ denotes the orthogonal direct sum. A lightlike hypersurface $M$ with a chosen screen distribution will be denoted by $M=(M, g, S(TM))$.  Then, there exists a unique vector bundle $\mathrm{tr}(TM)$, called the  lightlike transversal bundle of $M$ with respect to $S(TM)$,  of rank 1 over $M$ such that for any non-zero section $\xi$ of $TM^{\perp}$ on a coordinate neighbourhood $\mathcal{U}\subset M$, there exists a unique section $N$ of $\mathrm{tr}(TM)$ on $\mathcal{U}$ satisfying the conditions 
\begin{align}
	\bar{g}(\xi,N)=1\quad\mbox{and} \quad \bar{g}(N,N)=\bar{g}(N,Z)=0,
\end{align}
for any $Z$ tangent to $S(TM)$. Consequently, we have the following decomposition.  
\begin{align}
	T\bar{M}|_{M}&=S(TM)\perp \{TM^{\perp}\oplus \mathrm{tr}(TM)\}\label{n100}\\
	&=TM\oplus \mathrm{tr}(TM),\nonumber
\end{align}
where $\oplus$ denotes a direct sum, not necessarily orthogonal. 

Let $\bar{\nabla}$ be the Levi-Civita connection of $\bar{M}$ and let $P$ be the projection morphism of $TM$ onto $S(TM)$, with respect to (\ref{n97}). Then the local Gauss-Weingarten equations of $M$ and $S(TM)$ are given by \cite{Duggal5, Duggal6}.
\begin{align}
 &\bar{\nabla}_{X}Y=\nabla_{X}Y+B(X,Y)N,\quad \bar{\nabla}_{X}N=-A_{N}X+\tau(X)N,\nonumber\\
  \mbox{and}\quad&\nabla_{X}PY= \nabla^{*}_{X}PY + C(X,PY)\xi,\quad \nabla_{X}\xi=-A^{*}_{\xi}X -\tau(X) \xi,\nonumber
 \end{align}
respectively, for all $X$ and $Y$ tangent to $M$, $\xi$ tangent to $TM^{\perp}$ and $N$ tangent to $\mathrm{tr}(T M)$.  $\nabla$ and $\nabla^{*}$ are the induced connections on $TM$ and $S(TM)$, respectively.   $B$ and $C$ are the local second fundamental forms of $M$ and $S(TM)$, respectively. Furthermore,  $A_{N}$ and $A^{*}_{\xi}$ are the shape operators of $TM$ and $S(TM)$ respectively, while $\tau$ is a 1-form on $TM$. Moreover, 
\begin{align}\nonumber
	g(A^{*}_{\xi}X,Y)=B(X,Y)\quad \mbox{and}\quad g(A_{N}X,PY)= C(X,PY), 
\end{align}
for any $X$ and $Y$ tangent to $M$. Although $\nabla^{*}$ is a metric connection, $\nabla$ is generally not, and certisfy the relation
\begin{align}\nonumber
	(\nabla_{X}g)(Y,Z)=B(X,Y)\theta(Z)+B(X,Z)\theta(Y), 
\end{align}
where $\theta$ is 1-form given by $\theta(X)=\bar{g}(X,N)$, for all $X$, $Y$ and $Z$ tangent to $M$. For more details on lightlike hypersurfaces, we refer the reader to the books \cite{Duggal5, Duggal6}.

\section{A precise location of the structure vector field $\zeta$}\label{lightlike hypersurfaces}

 Let $M$ be a lightlike hypersurface of an almost contact metric manifold  $\bar{M}$. Let $\xi$ and $N$ be the lightlike sections spanning the normal bundle $TM^{\perp}$ and transversal bundle $\mathrm{tr}(TM)$ over $M$, respectively. Since the structure vector field  $\zeta$ is tangent to $\bar{M}$, we decompose it according to (\ref{n100}) as 
\begin{align}\label{po}
	\zeta=W+a \xi+b N,
\end{align}
where $W$ is a smooth section of $S(TM)$, while $a$ and $b$ are smooth functions on $M$. Using (\ref{bg2}) and (\ref{ou}), we have 
\begin{align}
	a=\eta(N)\quad \mbox{and} \quad b=\eta(\xi).
\end{align}
Furthermore, using the first relation in (\ref{p1}), together with (\ref{ou}) and (\ref{po}), we derive $g(W,W)+2ab=1$. We say that $M$ is {\it tangent} to $\zeta$ whenever $b=\eta(\xi)=0$. In this case, C. Calin \cite{Calin2} has shown that $a=0$ too, i.e. $\zeta$ belongs to $S(TM)$.

 Let $M$ be a lightlike hypersurface of an indefinite almost contact metric manifold $\bar{M}$. By virtue of relation (\ref{bg2}), we have $\bar{g}(\bar{\phi}\xi,\xi)=-\bar{g}(\xi,\bar{\phi}\xi)$, where $\xi$ is tangent to $TM^{\perp}$. It follows that $\bar{g}(\bar{\phi}\xi,\xi)=0$ and hence the $\bar{\phi}$ is always tangent to $M$. Let us choose a screen distribution $S(TM)$ such that $\bar{\phi}\xi$ is tangent to it. Thus, by (\ref{bg2}), we have $\bar{g}(\bar{\phi}N,N)=0$ and $\bar{g}(\bar{\phi}N,\xi)=-\bar{g}(N,\bar{\phi}\xi)=0$. These relations shows that $\bar{\phi}N$ is also tangent to $M$, and in particular belonging to $S(TM)$. Furthermore, we have $g(\bar{\phi}\xi, \bar{\phi}N)=1-ab$. Hence, $\bar{\phi}TM^{\perp}$ and $\bar{\phi}\mathrm{tr}(TM)$ are vector subbundles of $S(TM)$ of rank 1. Thus, there exists a non-degenerate  distribution $D'$, such that 
\begin{align}\label{p13}
	S(TM)=\{\bar{\phi}TM^{\perp}\oplus \bar{\phi}\mathrm{tr}(TM)\}\perp D'.
\end{align}
From (\ref{n97}), (\ref{n100}) and (\ref{p13}), the decompositions of $TM$ and $T\bar{M}$ becomes
\begin{align}\label{n101}
TM&=\{\bar{\phi}TM^{\perp}\oplus \bar{\phi}\mathrm{tr}(TM)\}\perp D'\perp  TM^{\perp};\\
T\bar{M}_{|M}&=\{\bar{\phi}TM^{\perp}\oplus \bar{\phi}\mathrm{tr}(TM)\}\perp D'\perp \{TM^{\perp}\oplus \mathrm{tr}(TM)\}.\label{bg3}
\end{align}
Furthermore, from (\ref{p13}), we can decompose $W$ as 
\begin{align}\label{po1}
	W=W'+f_{1} \bar{\phi}N+f_{2}\bar{\phi}\xi,
\end{align}
where $W'$ is a smooth section of $D'$, while $f_{1}$ and $f_{2}$ are smooth functions on $M$. 

\begin{proposition}
	$\bar{\phi}D'\subset S(TM)$.
\end{proposition}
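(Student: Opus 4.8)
The plan is to verify that, for every $X\in D'$, the vector field $\bar{\phi}X$ is orthogonal to both $\xi$ and $N$; by the decomposition (\ref{n100}) this is precisely the condition for a vector field along $M$ to belong to the screen distribution $S(TM)$.

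First I would record the membership criterion for $S(TM)$. Writing an arbitrary $V\in T\bar{M}_{|M}$ as $V=V_{0}+c_{1}\xi+c_{2}N$ with $V_{0}\in S(TM)$, and using $\bar{g}(\xi,N)=1$, $\bar{g}(\xi,\xi)=\bar{g}(N,N)=0$ together with the orthogonality of $S(TM)$ to $\xi$ and $N$, one obtains $c_{1}=\bar{g}(V,N)$ and $c_{2}=\bar{g}(V,\xi)$. Hence $V\in S(TM)$ if and only if $\bar{g}(V,\xi)=\bar{g}(V,N)=0$. Note that although $\bar{\phi}X$ a priori lives only in $T\bar{M}_{|M}$, these two orthogonality relations by themselves force it into $S(TM)$, so no separate tangency argument is required.

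Next, fixing $X\in D'$ and invoking the skew-symmetry (\ref{bg2}) of $\bar{\phi}$, I would compute
\begin{align*}
\bar{g}(\bar{\phi}X,\xi)=-\bar{g}(X,\bar{\phi}\xi)\quad\text{and}\quad \bar{g}(\bar{\phi}X,N)=-\bar{g}(X,\bar{\phi}N).
\end{align*}
Since $\bar{\phi}\xi$ spans $\bar{\phi}TM^{\perp}$ and $\bar{\phi}N$ spans $\bar{\phi}\mathrm{tr}(TM)$, while by the orthogonal splitting (\ref{p13}) the distribution $D'$ is orthogonal to $\bar{\phi}TM^{\perp}\oplus\bar{\phi}\mathrm{tr}(TM)$, both right-hand sides vanish. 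Therefore $\bar{g}(\bar{\phi}X,\xi)=\bar{g}(\bar{\phi}X,N)=0$, and the criterion of the previous paragraph gives $\bar{\phi}X\in S(TM)$, which is the assertion.

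There is no substantial obstacle in this argument: the only point demanding care is the membership criterion for $S(TM)$ — in particular recognizing that orthogonality to the lightlike pair $\{\xi,N\}$ already guarantees tangency to $M$ — after which the conclusion follows at once from the skew-symmetry of $\bar{\phi}$ and the defining orthogonality of $D'$ in (\ref{p13}).
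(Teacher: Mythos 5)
Your proof is correct and takes essentially the same route as the paper: both arguments compute $\bar{g}(\bar{\phi}X,\xi)=-\bar{g}(X,\bar{\phi}\xi)=0$ and $\bar{g}(\bar{\phi}X,N)=-\bar{g}(X,\bar{\phi}N)=0$ using the skew-symmetry (\ref{bg2}) and the orthogonality of $D'$ to $\bar{\phi}TM^{\perp}\oplus\bar{\phi}\mathrm{tr}(TM)$ in (\ref{p13}). Your only addition is to make explicit the membership criterion for $S(TM)$ (vanishing of both inner products against the lightlike pair $\xi$, $N$), which the paper handles in two implicit steps, first tangency to $M$ and then membership in $S(TM)$.
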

\begin{proof}
	On one hand, using (\ref{bg2}) and (\ref{p13}), we derive 
	\begin{align}\label{c3}
		\bar{g}(\bar{\phi}X', \xi)=-\bar{g}(X', \bar{\phi}\xi)=0,
	\end{align}
	for every $X'$ tangent to $D'$. Relation (\ref{c3}) shows that $\bar{\phi}X'$ is tangent to $M$. On the other hand, we have 
	\begin{align}\nonumber
		\bar{g}(\bar{\phi}X', N)=-\bar{g}(X', \bar{\phi}N)=0,
	\end{align}
which, indeed, shows that $\bar{\phi}X'$ is tangent to $S(TM)$.
\end{proof}
\begin{proposition}\label{c9}
	$D'$ is $\bar{\phi}$-invariant, i.e. $\bar{\phi}D'\subseteq D'$, if and only if one of the following holds: 
	\begin{enumerate}
		\item $a=b=0$;
		\item $W'=0$.
	\end{enumerate}
\end{proposition}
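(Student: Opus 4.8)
The plan is to test, for an arbitrary section $X'$ of $D'$, whether $\bar{\phi}X'$ acquires any component outside $D'$. By the preceding proposition $\bar{\phi}X'$ already lies in $S(TM)$, so by the orthogonal decomposition (\ref{p13}) the invariance $\bar{\phi}D'\subseteq D'$ is equivalent to the statement that the projection of $\bar{\phi}X'$ onto the rank-two bundle $\bar{\phi}TM^{\perp}\oplus\bar{\phi}\mathrm{tr}(TM)$ vanishes for every such $X'$. Since that rank-two bundle is non-degenerate and $\bar{g}$-orthogonal to $D'$, and is spanned by $\bar{\phi}\xi$ and $\bar{\phi}N$, this is in turn equivalent to requiring
\begin{align}\nonumber
\bar{g}(\bar{\phi}X',\bar{\phi}\xi)=0\quad\mbox{and}\quad \bar{g}(\bar{\phi}X',\bar{\phi}N)=0
\end{align}
for every $X'$ tangent to $D'$.

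The two inner products are computed from the metric identity (\ref{p2}). Applying it gives $\bar{g}(\bar{\phi}X',\bar{\phi}\xi)=\bar{g}(X',\xi)-\eta(X')\eta(\xi)$ and likewise $\bar{g}(\bar{\phi}X',\bar{\phi}N)=\bar{g}(X',N)-\eta(X')\eta(N)$. As $X'$ is a section of $D'\subseteq S(TM)$, it is orthogonal to both $\xi$ and $N$, so using $\eta(\xi)=b$ and $\eta(N)=a$ these collapse to $-b\,\eta(X')$ and $-a\,\eta(X')$ respectively. It then remains to evaluate $\eta(X')=\bar{g}(X',\zeta)$ via (\ref{ou}): substituting the decompositions (\ref{po}) and (\ref{po1}) for $\zeta$ and $W$, every term is killed by orthogonality of $X'$ against $\xi$, $N$, $\bar{\phi}\xi$ and $\bar{\phi}N$, leaving $\eta(X')=g(X',W')$.

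Putting these together, $\bar{\phi}D'\subseteq D'$ holds if and only if $a\,g(X',W')=0$ and $b\,g(X',W')=0$ for all $X'$ tangent to $D'$. Because $D'$ is non-degenerate and $W'$ is itself a section of $D'$, the quantity $g(X',W')$ vanishes for all $X'$ precisely when $W'=0$. Hence the pair of conditions holds exactly when $W'=0$, or else when $W'\neq 0$ together with $a=b=0$; this is equivalent to the disjunction $W'=0$ or $a=b=0$, i.e. to (2) or (1), which settles both directions simultaneously.

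I expect the only delicate point to be the reduction in the first paragraph: identifying membership in $D'$ with $\bar{g}$-orthogonality to $\bar{\phi}\xi$ and $\bar{\phi}N$ is legitimate only because the complementary rank-two bundle in (\ref{p13}) is non-degenerate. This is exactly what the orthogonal sum in (\ref{p13}) encodes, and it amounts to $1-2ab\neq 0$, since the Gram determinant of $\{\bar{\phi}\xi,\bar{\phi}N\}$ equals $2ab-1$ (equivalently $-g(W,W)$, by the relation $g(W,W)+2ab=1$). I would make this non-degeneracy explicit at the outset; the remaining steps are routine applications of (\ref{p1})--(\ref{bg2}) together with the non-degeneracy of $D'$.
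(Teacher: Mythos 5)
Your proof is correct and takes essentially the same route as the paper's: both hinge on the identities $\bar{g}(\bar{\phi}X',\bar{\phi}\xi)=-b\,\eta(X')$ and $\bar{g}(\bar{\phi}X',\bar{\phi}N)=-a\,\eta(X')$ obtained from (\ref{p2}), together with $\eta(X')=g(X',W')$ and the non-degeneracy of $D'$. The only difference is one of explicitness: you work out the converse direction (via the non-degeneracy of the rank-two summand in (\ref{p13}), whose Gram determinant you correctly identify as $2ab-1$), which the paper simply declares obvious.
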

\begin{proof}
	In view of the first relation in (\ref{p2}), we have 
	\begin{align}\label{c11}
		\bar{g}(\bar{\phi}X', \bar{\phi}N)=-a\eta(X')\quad \mbox{and}\quad \bar{g}(\bar{\phi}X', \bar{\phi}\xi)=-b\eta(X'),
	\end{align}
	for every $X'$ tangent to $D'$. Now, if $D'$ is $\bar{\phi}$-invariant then (\ref{c11}) gives 
	\begin{align}\label{c12}
		a\eta(X')=b\eta(X')=0.
	\end{align}
Relation (\ref{c12}) shows that either $a=b=0$ or $\eta(X')=g(X', W')=0$, that is $W'=0$. The converse is obvious.
\end{proof}
\begin{definition}
\rm{
	A lightlike hypersurface $M$ of an indefinite almost contact metric-manifold $\bar{M}$ is said to be an {\it ascreen} \cite{Jin4,Jin1,Jin2, Jin3} lightlike hypersurface  of $\bar{M}$ if the vector field $\zeta$ belongs to $S(TM)^{\perp}=TM^{\perp} \oplus \mathrm{tr}(TM)$.
	}
\end{definition}
\noindent In any ascreen lightlike hypersurface, $W'=0$ and $a,b\ne 0$. It follows that $D'$ is $\bar{\phi}$-invariant. Moreover, the following result about ascreen hypersurfaces is known.
\begin{theorem}[\cite{Jin4}]\label{c31}
Let $M$ be a lightlike hypersurface of an indefinite almost contact metric-manifold $\bar{M}$. Then $M$ is an ascreen lightlike hypersurface of $\bar{M}$ if and only if $\bar{\phi}TM^{\perp}=\bar{\phi}\mathrm{tr}(TM)$.	
\end{theorem}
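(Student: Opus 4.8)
The plan is to translate the bundle equality $\bar{\phi}TM^{\perp}=\bar{\phi}\mathrm{tr}(TM)$ into a pointwise proportionality between the two lightlike sections $\bar{\phi}\xi$ and $\bar{\phi}N$, and then to exploit the two structural identities $\bar{\phi}\zeta=0$ from (\ref{bg1}) and $\bar{\phi}^{2}=-I+\eta\otimes\zeta$ from (\ref{p1}), together with the fact that, by (\ref{po}), being ascreen is equivalent to $W=0$.

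For the forward implication, I would start from the assumption that $M$ is ascreen, so that $\zeta=a\xi+bN$ with $W=0$. Feeding $W=0$ into the scalar relation $g(W,W)+2ab=1$ gives $2ab=1$, so both $a$ and $b$ are nowhere zero. Applying $\bar{\phi}$ to $\zeta$ and using $\bar{\phi}\zeta=0$ then yields $a\bar{\phi}\xi+b\bar{\phi}N=0$; since $a,b\neq 0$ and both $\bar{\phi}\xi,\bar{\phi}N$ are nonvanishing (they span the rank-one subbundles appearing in (\ref{p13})), this forces $\bar{\phi}\xi$ and $\bar{\phi}N$ to be proportional, i.e. $\bar{\phi}TM^{\perp}=\bar{\phi}\mathrm{tr}(TM)$.

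The converse is where the main work lies. Assuming $\bar{\phi}TM^{\perp}=\bar{\phi}\mathrm{tr}(TM)$ and using that both are rank-one, I would write $\bar{\phi}N=\lambda\bar{\phi}\xi$ for some function $\lambda$. The key trick is to apply $\bar{\phi}$ once more and invoke $\bar{\phi}^{2}=-I+\eta\otimes\zeta$ together with $a=\eta(N)$, $b=\eta(\xi)$; this reintroduces $\zeta$ and collapses the relation to
\begin{align}\nonumber
	(a-\lambda b)\zeta=N-\lambda\xi.
\end{align}
Substituting $\zeta=W+a\xi+bN$ from (\ref{po}) and comparing components along $S(TM)$, $TM^{\perp}$ and $\mathrm{tr}(TM)$ — which is legitimate because the sum in (\ref{n100}) is direct — gives the three scalar equations $(a-\lambda b)W=0$, $(a-\lambda b)a=-\lambda$ and $(a-\lambda b)b=1$.

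The decisive step is reading off the $N$-component $(a-\lambda b)b=1$: this simultaneously forces $b\neq 0$ and, crucially, $a-\lambda b\neq 0$. The $S(TM)$-component $(a-\lambda b)W=0$ then yields $W=0$, so that $\zeta\in S(TM)^{\perp}$ and $M$ is ascreen. I expect the only delicate point to be verifying that the coefficient $a-\lambda b$ does not vanish — everything else is a routine comparison of components — and this is precisely what the $N$-component equation secures.
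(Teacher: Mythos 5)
Your proof is correct, but note first that the paper contains no proof of this theorem: it is imported as a known result from \cite{Jin4}, so there is no internal argument to compare against, and your attempt must be judged on its own merits. It passes. The forward direction ($W=0$ forces $2ab=1$, hence $a,b\neq 0$; then $0=\bar{\phi}\zeta=a\bar{\phi}\xi+b\bar{\phi}N$ makes the two nonvanishing sections proportional) is the same style of computation the paper performs inside case (1) of Theorem \ref{c8}, only simpler here because ascreen is assumed rather than derived. Your converse checks out as well: from $\bar{\phi}N=\lambda\bar{\phi}\xi$, applying $\bar{\phi}^{2}=-I+\eta\otimes\zeta$ gives $-N+a\zeta=\lambda(-\xi+b\zeta)$, i.e. $(a-\lambda b)\zeta=N-\lambda\xi$, and substituting (\ref{po}) and comparing components in the direct decomposition (\ref{n100}) yields $(a-\lambda b)W=0$, $(a-\lambda b)a=-\lambda$, $(a-\lambda b)b=1$; the last equation does indeed force $a-\lambda b\neq 0$ (and $b\neq 0$, which recovers the paper's remark that $a,b\neq 0$ on ascreen hypersurfaces), so $W=0$ and $\zeta\in S(TM)^{\perp}$. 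Notably, this converse is exactly the implication the paper invokes as a black box at the end of case (1) of Theorem \ref{c8}, so your argument would make the paper self-contained there. Two small housekeeping points: writing $\bar{\phi}N=\lambda\bar{\phi}\xi$ with $\lambda$ smooth presupposes that $\bar{\phi}\xi$ is nowhere zero; this is part of the paper's standing setup (the rank-one subbundles of (\ref{p13})), and can also be checked directly, since $\bar{\phi}\xi=0$ would give $\xi=b\zeta$ via $\bar{\phi}^{2}\xi=-\xi+b\zeta$, whence $0=\bar{g}(\xi,\xi)=b^{2}$ and $\xi=0$, absurd. Also, your first ``scalar equation'' $(a-\lambda b)W=0$ is really a vector equation along $S(TM)$, though the conclusion $W=0$ is unaffected.
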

	
\noindent In the next theorem, we show that there are only two types of lightlike hypersurface $M$ of an indefinite almost contact manifold $\bar{M}$ according to the position of the structure vector field $\zeta$.

\begin{theorem}\label{c8}
	Let $M$ be a lightlike hypersurface of an indefinite almost contact metric-manifold $\bar{M}$. Then, $\zeta$ takes exactly one of the following forms:
	\begin{enumerate}
		\item $\zeta=a\xi+bN$, i.e. $M$ is an ascreen hypersurface;
		\item $\zeta=W'+a\xi+bN$, where $W'$ is a non-zero section of $D'$.
	\end{enumerate}
	
\end{theorem}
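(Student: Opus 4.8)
The plan is to reduce everything to the single scalar $1-2ab$ and to decide the position of $\zeta$ according to whether this quantity vanishes. First I would record two identities valid on any such hypersurface: since $\eta\circ\bar{\phi}=0$ by (\ref{bg1}), relation (\ref{ou}) gives $\bar{g}(\zeta,\bar{\phi}\xi)=\eta(\bar{\phi}\xi)=0$ and $\bar{g}(\zeta,\bar{\phi}N)=\eta(\bar{\phi}N)=0$. Substituting the expansion $\zeta=W'+f_{1}\bar{\phi}N+f_{2}\bar{\phi}\xi+a\xi+bN$ coming from (\ref{po}) and (\ref{po1}), and evaluating each inner product with (\ref{p2}) together with the already established relations $\bar{g}(\bar{\phi}\xi,\xi)=\bar{g}(\bar{\phi}N,N)=\bar{g}(\bar{\phi}N,\xi)=0$ and $\bar{g}(\bar{\phi}\xi,\bar{\phi}N)=1-ab$, I expect to obtain the homogeneous linear system
\begin{align}\nonumber
(1-ab)f_{1}-b^{2}f_{2}=0,\qquad -a^{2}f_{1}+(1-ab)f_{2}=0,
\end{align}
whose coefficient determinant is $(1-ab)^{2}-a^{2}b^{2}=1-2ab$.

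Next I would observe that the same quantity governs the rank of $\bar{\phi}TM^{\perp}\oplus\bar{\phi}\mathrm{tr}(TM)$: the Gram matrix of $\{\bar{\phi}\xi,\bar{\phi}N\}$ has determinant $a^{2}b^{2}-(1-ab)^{2}=-(1-2ab)$, so $\bar{\phi}\xi$ and $\bar{\phi}N$ are linearly dependent precisely when $1-2ab=0$, i.e. precisely when $\bar{\phi}TM^{\perp}=\bar{\phi}\mathrm{tr}(TM)$. By Theorem \ref{c31} this is exactly the condition that $M$ be ascreen. This identification is the conceptual heart of the argument, and it lets me split into two mutually exclusive cases.

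If $1-2ab=0$, then $M$ is ascreen, so by definition $\zeta\in S(TM)^{\perp}=TM^{\perp}\oplus\mathrm{tr}(TM)$; hence $W=0$ and $\zeta=a\xi+bN$, which is form (1). If $1-2ab\neq0$, the linear system above is nonsingular and forces $f_{1}=f_{2}=0$, so $W=W'$ and $\zeta=W'+a\xi+bN$. Finally, using the normalization $g(W,W)+2ab=1$ from (\ref{po}), I get $g(W',W')=g(W,W)=1-2ab\neq0$, so $W'$ is a nonzero section of $D'$, which is form (2). The two cases are disjoint because they are distinguished by $W'=0$ versus $W'\neq0$, and together they exhaust all possibilities, giving the asserted \emph{exactly one of}.

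The step I expect to be the main obstacle is the degenerate case $1-2ab=0$: there the linear system admits nontrivial solutions for $f_{1},f_{2}$, so one cannot conclude $f_{1}=f_{2}=0$ by linear algebra alone. The resolution is precisely the observation above: in this case the decomposition (\ref{p13}) degenerates, since $\bar{\phi}TM^{\perp}$ and $\bar{\phi}\mathrm{tr}(TM)$ coincide and the expansion (\ref{po1}) is no longer unique; hence one must bypass (\ref{po1}) entirely and read off $\zeta=a\xi+bN$ directly from the definition of ascreen, using Theorem \ref{c31} to certify that $M$ is indeed ascreen in this regime.
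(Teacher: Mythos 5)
Your reduction to the scalar $1-2ab$ reproduces the paper's computation exactly: your linear system is the paper's (\ref{mu2}), its determinant identity is the paper's (\ref{c4}), and your treatment of the case $1-2ab\neq 0$ (including $g(W',W')=1-2ab\neq 0$ forcing $W'\neq 0$) is the paper's case (2). The genuine problem is in the degenerate case $2ab=1$, where you argue that the vanishing of the Gram determinant of $\{\bar{\phi}\xi,\bar{\phi}N\}$, namely $\det\begin{pmatrix} -b^{2} & 1-ab\\ 1-ab & -a^{2}\end{pmatrix}=-(1-2ab)=0$, implies that $\bar{\phi}\xi$ and $\bar{\phi}N$ are linearly dependent. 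In an \emph{indefinite} metric this inference does not hold: a vanishing Gram determinant of two vectors says only that they are dependent \emph{or} that they span a degenerate plane, and since $g(\bar{\phi}\xi,\bar{\phi}\xi)=-b^{2}$ and $g(\bar{\phi}N,\bar{\phi}N)=-a^{2}$ the restricted metric is not definite, so no Cauchy--Schwarz equality argument is available. A priori the kernel vector $v:=a\bar{\phi}\xi+b\bar{\phi}N$ (and $(a,b)\neq(0,0)$ because $2ab=1$) could be a nonzero null vector orthogonal to the span, with $\bar{\phi}\xi,\bar{\phi}N$ still independent. Since this dependence claim is the "conceptual heart" of your case (1) — everything else there is read off from Theorem \ref{c31} and the definition of ascreen — the proof as written is incomplete at its pivotal step.

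The gap is fixable using only facts already established in the paper: $v$ is orthogonal to $\bar{\phi}\xi$ and $\bar{\phi}N$ (kernel property), orthogonal to $D'$ by (\ref{p13}), and, being a section of $S(TM)$, orthogonal to $TM^{\perp}\oplus\mathrm{tr}(TM)$; hence by the decomposition (\ref{bg3}) it is orthogonal to all of $T\bar{M}|_{M}$, and non-degeneracy of $\bar{g}$ forces $v=0$, i.e.\ $\bar{\phi}\xi=-(b/a)\bar{\phi}N$. With that lemma inserted, your argument closes and is genuinely different from the paper's in this case: the paper never touches the Gram matrix, but instead first gets $W'=0$ from $g(W',W')=1-2ab=0$ and the non-degeneracy of $D'$, then applies $\bar{\phi}$ to $\zeta=f_{1}\bar{\phi}N+f_{2}\bar{\phi}\xi+a\xi+bN$ and exploits $\bar{\phi}\zeta=0$ to derive $\bar{\phi}\xi=\lambda\bar{\phi}N$ via (\ref{bv2})--(\ref{bv4}), only then invoking Theorem \ref{c31}. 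Your repaired route is arguably cleaner — it decides the ascreen alternative from the pair $\{\bar{\phi}\xi,\bar{\phi}N\}$ alone, bypassing the non-unique expansion (\ref{po1}) entirely, exactly as you anticipated — but as submitted, the dependence of $\bar{\phi}\xi$ and $\bar{\phi}N$ is asserted where it needs proof.
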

\begin{proof}
\noindent From relations (\ref{po}) and (\ref{po1}), $\zeta$ can be written as 
\begin{align}\label{jk}
	\zeta=W'+f_{1}\bar{\phi}N+f_{2}\bar{\phi}\xi+a\xi+bN.
\end{align}
Taking the inner product of (\ref{jk}) with $\bar{\phi}\xi$ and $\bar{\phi}N$ in turns, and considering (\ref{p2}) and (\ref{bg1}), we have 
\begin{align}
	b^{2}f_{2}=f_{1}(1-ab)\quad \mbox{and}\quad a^{2}f_{1}=f_{2}(1-ab),\label{mu2}
\end{align}
respectively. It follows from (\ref{mu2}), that 
\begin{align}\label{c4}
	(2ab-1)f_{1}=0 \quad \mbox{and}\quad (2ab-1)f_{2}=0.
\end{align}
From (\ref{c4}), we have the following cases:
\begin{enumerate}
	\item $2ab=1$: 
	Taking the inner product on both sides of (\ref{jk}) with $\zeta$, we get $g(W', W')+2ab=1$. Thus, we have $g(W', W')=0$, which implies that $W'=0$ by the fact that $D'$ is non-degenerate. Then $\zeta$ in (\ref{jk}) reduces to 
\begin{align}\label{gf}
	\zeta=f_{1}\bar{\phi}N+f_{2}\bar{\phi}\xi+a\xi+bN.
\end{align}
Applying $\bar{\phi}$ to (\ref{gf}), and remembering that $\bar{\phi}\zeta=0$, we get 
\begin{align}\label{bv1}
	a\bar{\phi}\xi+b\bar{\phi}N-f_{2}\xi-f_{1}N+f\zeta=0,
\end{align}
where $f=af_{1}+bf_{2}$. Then using (\ref{gf}) and (\ref{bv1}), we get 
\begin{align}
	(ff_{2}+a)\bar{\phi}\xi+&(ff_{1}+b)\bar{\phi}N=0,\label{bv2}\\
	af&=f_{2},\label{bv3}\\
	bf&=f_{1}.\label{bv4}
\end{align}
Note that none of the functions $ff_{2}+a$ and $ff_{1}+b$ seen in (\ref{bv2}) vanishes. In fact, if one assumes that $ff_{2}+a=0$, then (\ref{bv3}) leads to 
\begin{align}\label{c5}
	aff_{2}+a^{2}=f^{2}_{2}+a^{2}=0.
\end{align}
Relation (\ref{c5}) shows that $f_{2}=a=0$. But this is impossible since $2ab=1$. Also, if $ff_{1}+b=0$, relation (\ref{bv4}) leads to $f_{1}=b=0$, which is again impossible. Thus, we may rewrite (\ref{bv2}) as 
\begin{align}\label{c6}
	\bar{\phi}\xi=\lambda \bar{\phi}N,
\end{align}
where $\lambda=-\frac{ff_{1}+b}{ff_{2}+a}$ is a non-zero smooth function on $M$. It is clear from (\ref{c6}) that $\bar{\phi}TM^{\perp}=\bar{\phi}\mathrm{tr}(TM)$, and hence by  Theorem \ref{c31}, $M$ is ascreen.
\item $2ab\ne 1$: In this case, we see that $f_{1}=f_{2}=0$ and therefore $\zeta=W'+a\xi+bN$,
from which we get $g(W',W')+2ab=1$. Furthermore, $W'\ne 0$ since if one takes $W'=0$, we get $2ab=1$, which is a contradiction,
\end{enumerate}
which completes the proof.
\end{proof}

\begin{corollary}
	Every lightlike hypersurface of an indefinite almost contact metric-manifold  in which $D'$ is $\bar{\phi}$-invariant, i.e. $\bar{\phi}D'\subseteq D'$, is either ascreen or tangential with $\zeta$ tangent to $D'$.
\end{corollary}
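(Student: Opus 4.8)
The plan is to combine Proposition \ref{c9} with the dichotomy established in Theorem \ref{c8}. The hypothesis that $D'$ is $\bar{\phi}$-invariant feeds directly into Proposition \ref{c9}, which tells us that either $a=b=0$ or $W'=0$. I would treat these two alternatives separately and show that they correspond exactly to the two conclusions of the corollary.

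First I would dispose of the case $W'=0$. By Theorem \ref{c8}, $\zeta$ must take exactly one of the two listed forms, and the second form explicitly requires $W'$ to be a \emph{non-zero} section of $D'$. Hence $W'=0$ forces $\zeta=a\xi+bN$, so that $M$ is ascreen. Equivalently, one may observe that the branch $2ab\neq1$ of the proof of Theorem \ref{c8} produces $W'\neq0$, so $W'=0$ can only occur in the branch $2ab=1$, which is the ascreen case.

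Next I would handle the case $a=b=0$. Recalling the observation of C.\ Calin \cite{Calin2} that $b=\eta(\xi)=0$ already forces $a=0$, this case is precisely the tangential one, so $\zeta$ has no transversal component and is tangent to $M$. To locate $\zeta$ more precisely, I would note that $a=b=0$ gives $2ab=0\neq1$; following the branch $2ab\neq1$ of the proof of Theorem \ref{c8}, relations (\ref{mu2})--(\ref{c4}) force $f_{1}=f_{2}=0$, so that $\zeta=W'$. The normalisation $g(W',W')+2ab=1$ then reads $g(W',W')=1$, whence $W'\neq0$ and $\zeta=W'$ is a non-zero section of $D'$. Thus $M$ is tangential with $\zeta$ tangent to $D'$, as claimed.

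The argument is essentially bookkeeping resting on the two earlier results, so I expect no genuine analytic obstacle; the only point requiring care is the logical matching of the two alternatives of Proposition \ref{c9} to the two conclusions, together with the use of Calin's remark to identify the condition $a=b=0$ with tangentiality. One should also verify that the two alternatives do not spawn a spurious third possibility: if $a=b=0$ and $W'=0$ held simultaneously, then $f_{1}=f_{2}=0$ would give $\zeta=0$, contradicting $\eta(\zeta)=1$, so this degenerate overlap never occurs and the classification is clean.
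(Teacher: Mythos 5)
Your proposal is correct and follows exactly the route the paper intends: the corollary is stated there without proof as an immediate consequence of Proposition \ref{c9} (giving the dichotomy $a=b=0$ or $W'=0$) combined with the classification of Theorem \ref{c8}, which is precisely your case analysis. Your closing check that the overlap $a=b=0$, $W'=0$ is impossible (it would force $\zeta=0$, contradicting $\eta(\zeta)=1$) is a harmless extra verification not needed for the ``either/or'' conclusion.
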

\noindent Using part (2) of Theorem \ref{c8}, we have the following definition.
\begin{definition}\label{c13}
\rm{
A lightlike hypersurface $M$ of an definite almost contact metric-manifold $\bar{M}$ is called {\it inascreen} if $W'\ne 0$. In addition, if $b\ne 0$ then $M$ will be called a {\it proper inascreen} lightlike hypersurface.
}
\end{definition}
\begin{example}
\rm{
Any lightlike hypersurface of an indefinite almost contact metric-manifold that is tangent to the structure vector field $\zeta$ is inascreen with $a=b=0$.
}
\end{example}
\noindent In view of Proposition \ref{c9} and Definition \ref{c13}, we hve the following. 
\begin{proposition}
	The only inascreen lightlike hypersurfaces of an indefinite almost contact metric-manifold with a $\bar{\phi}$-invariant $D'$ are those tangent to the structure vector field $\zeta$. 
\end{proposition}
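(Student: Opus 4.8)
The plan is to deduce the statement directly from Proposition \ref{c9}, which I regard as the real engine here: it already equates $\bar{\phi}$-invariance of $D'$ with the dichotomy $a=b=0$ or $W'=0$. Combined with the defining feature $W'\neq 0$ of an inascreen hypersurface (Definition \ref{c13}), the proposition reduces to eliminating one branch of that dichotomy.

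For the forward implication I would argue as follows. Suppose $M$ is inascreen with $D'$ being $\bar{\phi}$-invariant. By Definition \ref{c13} the inascreen hypothesis reads $W'\neq 0$, while Proposition \ref{c9} tells us that $\bar{\phi}$-invariance forces $a=b=0$ or $W'=0$. The branch $W'=0$ is excluded, so $a=b=0$ must hold. In particular $b=\eta(\xi)=0$, which by the definition of tangency recorded after (\ref{po}) in Section \ref{lightlike hypersurfaces} says exactly that $M$ is tangent to $\zeta$.

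For the converse I would verify that every tangential hypersurface belongs to the stated class, so that the two families genuinely coincide rather than one merely containing the other. If $M$ is tangent to $\zeta$ then $b=0$, and Calin's result quoted after (\ref{po}) upgrades this to $a=b=0$; the converse half of Proposition \ref{c9} then returns $\bar{\phi}$-invariance of $D'$. Finally, the Example already records that a tangential hypersurface is inascreen, i.e. $W'\neq 0$, which completes the identification of the two classes.

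I do not anticipate a genuine obstacle: the substantive work lives in Proposition \ref{c9}, and the only logical subtlety is keeping track of both directions of the characterization. If one wishes the converse to be self-contained, the single nontrivial point---that tangency yields $W'\neq 0$ and not merely $W\neq 0$---follows by substituting $a=b=0$ into the normalization $g(W',W')+2ab=1$ from case (2) of Theorem \ref{c8}, which gives $g(W',W')=1$ and simultaneously shows that tangential hypersurfaces always fall into case (2) with $f_{1}=f_{2}=0$.
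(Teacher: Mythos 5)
Your proof is correct and follows essentially the same route as the paper, which deduces the proposition directly from Proposition \ref{c9} together with Definition \ref{c13}: the inascreen condition $W'\neq 0$ eliminates the branch $W'=0$ of the dichotomy, forcing $a=b=0$ and hence tangency. Your self-contained verification of the converse, in particular that $a=b=0$ substituted into $g(W',W')+2ab=1$ gives $g(W',W')=1$ and so $W'\neq 0$, is exactly the content the paper records in its Example on tangential hypersurfaces.
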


\section{Proper inascreen lightlike hypersurfaces}\label{parasff}
 Since inascreen lightlike hypersurfaces with $b=0$ are the well-known tangential lightlike hypersurfaces, in this section we shall focus only on the proper ones, i.e. those in which $b\ne 0$.

\begin{proposition}\label{c17}
Let $M$ be a proper inascreen lightlike hypersurface of an indefinite almost contact metric-manifold $\bar{M}$ then, the  following holds:
\begin{enumerate}
\item $D'$ is never a $\bar{\phi}$-invariant distribution;
\item $\bar{\phi}N$ and $\bar{\phi}\xi$ are linearly independent vector fields;
\item  $\zeta$ is linearly independent with any $X$ tangent to $M$.
\end{enumerate}
\end{proposition}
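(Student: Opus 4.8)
The plan is to read off all three assertions from the normal form of $\zeta$ established in part (2) of Theorem \ref{c8}. Throughout I work in the proper inascreen case, so that
\[
\zeta = W' + a\xi + bN, \qquad W'\ne 0,\quad b\ne 0,\quad 2ab\ne 1,
\]
with $f_1=f_2=0$; moreover $\bar g(\zeta,\zeta)=\eta(\zeta)=1$ by (\ref{p1}) and (\ref{ou}), so pairing the above with $\zeta$ gives $g(W',W')=1-2ab\ne 0$. These few relations are all I expect to need.

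For (1) I would invoke Proposition \ref{c9} directly: it characterises $\bar\phi$-invariance of $D'$ as being equivalent to $a=b=0$ or $W'=0$. Since the proper inascreen hypothesis forces $b\ne 0$ and $W'\ne 0$, neither alternative can hold, and therefore $D'$ is never $\bar\phi$-invariant.

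For (2) I would first recall from the preliminary discussion that $\bar\phi TM^{\perp}$ and $\bar\phi\,\mathrm{tr}(TM)$ are rank-one subbundles of $S(TM)$, so that $\bar\phi\xi$ and $\bar\phi N$ are both nowhere vanishing; this is what lets me speak of their linear dependence at all. I then argue by contradiction: a relation $\bar\phi N=\mu\,\bar\phi\xi$ (necessarily with $\mu\ne 0$) would give $\bar\phi TM^{\perp}=\bar\phi\,\mathrm{tr}(TM)$, whence Theorem \ref{c31} would make $M$ ascreen and force $W'=0$, contradicting the inascreen condition. Should one prefer to avoid Theorem \ref{c31}, the same conclusion comes out of the metric identities $\bar g(\bar\phi\xi,\bar\phi\xi)=-b^2$, $\bar g(\bar\phi N,\bar\phi N)=-a^2$ and $\bar g(\bar\phi\xi,\bar\phi N)=1-ab$ derived from (\ref{p2}): substituting $\bar\phi N=\mu\,\bar\phi\xi$ and eliminating $\mu$ collapses them to $2ab=1$, again contradicting the standing case assumption. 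Either route yields linear independence of $\bar\phi\xi$ and $\bar\phi N$.

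For (3) it suffices to show $\zeta\notin TM$: since $\zeta\ne 0$, any nonzero tangent vector linearly dependent with $\zeta$ would be a scalar multiple of it, and hence would place $\zeta$ inside $TM$. Now in the direct decomposition $T\bar M|_M=TM\oplus\mathrm{tr}(TM)$ of (\ref{n100}), the transversal component of $\zeta=W'+a\xi+bN$ is exactly $bN$, which is nonzero because $b\ne 0$; thus $\zeta\notin TM$ and (3) follows. The only step I expect to require genuine care is (2), where one must first secure the nonvanishing of $\bar\phi\xi$ and $\bar\phi N$ before discussing dependence, and then have Theorem \ref{c31} (or the explicit metric identities) at hand to turn that dependence into the forbidden equality $2ab=1$; parts (1) and (3) are immediate consequences of Proposition \ref{c9} and of the direct-sum structure in (\ref{n100}).
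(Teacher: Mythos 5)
Your proof is correct. On parts (1) and (3) it coincides with the paper's: the paper likewise quotes Proposition \ref{c9} for (1), and for (3) it pairs a putative relation $l_{3}X+l_{4}\zeta=0$ with $\xi$ to extract $l_{4}\eta(\xi)=l_{4}b=0$ --- which is exactly your observation that the $\mathrm{tr}(TM)$-component of $\zeta$ in (\ref{n100}) is $bN\neq 0$, expressed metrically rather than via the direct sum. The one genuine divergence is in (2): the paper does not invoke Theorem \ref{c31}; it assumes a general relation $l_{1}\bar{\phi}N+l_{2}\bar{\phi}\xi=0$, pairs it with $\bar{\phi}N$ and $\bar{\phi}\xi$ to obtain $a^{2}l_{1}=l_{2}(1-ab)$ and $b^{2}l_{2}=l_{1}(1-ab)$, and combines these into $l_{1}(1-2ab)=l_{2}(1-2ab)=0$, whence $l_{1}=l_{2}=0$ since $2ab\neq 1$. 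That is precisely your ``fallback'' metric-identity computation with the coefficients kept general, and it buys two things your primary route lacks: it works pointwise and never divides by a coefficient, so it requires no prior nonvanishing of $\bar{\phi}\xi$ and $\bar{\phi}N$. Your route through Theorem \ref{c31} is slicker, but as you anticipated it must first secure $\bar{\phi}\xi\neq 0$ and $\bar{\phi}N\neq 0$ (available from the paper's rank-one claim for $\bar{\phi}TM^{\perp}$ and $\bar{\phi}\mathrm{tr}(TM)$, or in the proper case directly from $\bar{g}(\bar{\phi}\xi,\bar{\phi}\xi)=-b^{2}\neq 0$ together with $\bar{g}(\bar{\phi}N,\bar{\phi}N)=-a^{2}$ and $\bar{g}(\bar{\phi}\xi,\bar{\phi}N)=1-ab$), and it leans on a global equivalence: Theorem \ref{c31} characterizes ascreen hypersurfaces via $\bar{\phi}TM^{\perp}=\bar{\phi}\mathrm{tr}(TM)$ on all of $M$, so strictly speaking it rules out dependence holding everywhere rather than at a single point --- a pointwise-versus-global blur the paper itself indulges in proving Theorem \ref{c8}, and one which your alternative computation (correctly collapsing the dependence to the forbidden $2ab=1$) repairs in any case. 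In short: same substance throughout, with a legitimate shortcut for (2) and commendable explicit attention to nonvanishing that the paper glosses over.
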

\begin{proof}
In a proper inascreen lightlike hypersurface, we know that $W'\ne 0$ and $b\ne 0$. It follows from Proposition \ref{c9} that $D'$ is never $\bar{\phi}$-invariant. Suppose that 
\begin{align}\label{c40}
	l_{1}\bar{\phi}N+l_{2}\bar{\phi}\xi=0,
\end{align}
for some smooth functions $l_{1}$ and $l_{2}$ on $M$. Taking the inner product of (\ref{c40}) with $\bar{\phi}N$ and $\bar{\phi}\xi$ by turns, we get 
\begin{align}\label{c41}
	a^{2}l_{1}=l_{2}(1-ab)\quad \mbox{and}\quad b^{2}l_{2}=l_{1}(1-ab),\end{align}
respectively. The two relations in (\ref{c41}) leads to 
\begin{align}\label{c42}
	l_{1}(1-2ab)=0\quad\mbox{and}\quad l_{2}(1-2ab)=0.
\end{align}
Since $2ab\ne 1$, (\ref{c42}) gives $l_{1}=l_{2}=0$. Finally, suppose that 
\begin{align}
	l_{3} X+l_{4}\zeta=0,
\end{align}
for any tangent vector field $X$, where $l_{3}$ and $l_{4}$ are some smooth functions on $M$. Then, taking the inner product of this relation with $\xi$ leads to $l_{4}\eta(\xi)=l_{4}b=0$. Since $b\ne 0$, we get $l_{4}=0$. Consequently, $l_{3} X=0$, which gives $l_{3}=0$. 
\end{proof}
\noindent Using part (2) of Proposition \ref{c17}, we deduce the following.
\begin{proposition}
	Let $M$ be a proper inascreen lightlike hypersurface of an indefinite almost contact metric-manifold $\bar{M}$ then, $\dim M\ge  4$ and $\dim \bar{M}\ge 5$.
\end{proposition}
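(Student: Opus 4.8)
The plan is to read off the two lower bounds directly from the orthogonal decomposition (\ref{n101}) of $TM$ by a count of the ranks of its summands. Recall that (\ref{n101}) exhibits $TM$ as the orthogonal direct sum
\[
TM=\{\bar{\phi}TM^{\perp}\oplus \bar{\phi}\mathrm{tr}(TM)\}\perp D'\perp TM^{\perp},
\]
so that $\dim M$ is the sum of the ranks of the four constituent subbundles. Each of $TM^{\perp}$, $\bar{\phi}TM^{\perp}$ and $\bar{\phi}\mathrm{tr}(TM)$ has rank $1$, being spanned respectively by $\xi$, $\bar{\phi}\xi$ and $\bar{\phi}N$, while $D'$ contributes whatever rank it carries.

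The first substantive point is to guarantee that $\bar{\phi}TM^{\perp}\oplus \bar{\phi}\mathrm{tr}(TM)$ is genuinely of rank $2$, rather than collapsing to rank $1$. This is exactly where the inascreen hypothesis enters: by part (2) of Proposition \ref{c17}, the vector fields $\bar{\phi}\xi$ and $\bar{\phi}N$ are linearly independent, so their spans form a rank-$2$ subbundle of $S(TM)$. (Had these two fields been proportional, Theorem \ref{c31} would force $M$ to be ascreen, contradicting the standing hypothesis.) The second point is that $D'$ is nontrivial: since $M$ is inascreen we have $W'\ne 0$, and because $W'$ is a smooth section of $D'$ this forces $\mathrm{rank}\,D'\ge 1$.

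Assembling these observations in the rank count gives $\dim M = 2 + \mathrm{rank}\,D' + 1 \ge 2 + 1 + 1 = 4$. Finally, since $M$ is a hypersurface of $\bar{M}$ one has $\dim \bar{M}=\dim M + 1$, whence $\dim \bar{M}\ge 5$, which is also consistent with $\bar{M}$ being odd-dimensional. I do not expect a serious obstacle, as the argument is purely a bookkeeping of bundle ranks; the only place demanding care is the justification of the two strict lower bounds, namely rank $2$ for the $\bar{\phi}$-image part and rank $\ge 1$ for $D'$. Both are furnished by the proper inascreen assumption through Proposition \ref{c17}(2) and the defining condition $W'\ne 0$.
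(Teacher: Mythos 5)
Your proof is correct and follows exactly the route the paper intends: the paper deduces the proposition from Proposition \ref{c17}(2) without spelling out details, and your rank count in the decomposition (\ref{n101}) --- rank $2$ for $\bar{\phi}TM^{\perp}\oplus\bar{\phi}\mathrm{tr}(TM)$ via the linear independence of $\bar{\phi}\xi$ and $\bar{\phi}N$, rank $\ge 1$ for $D'$ from $W'\ne 0$, and rank $1$ for $TM^{\perp}$ --- is precisely the implicit argument. Nothing is missing; your version simply makes the bookkeeping explicit.
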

\noindent As $\zeta$ is nowhere tangent to $M$, we put  
\begin{align}\label{bg34}
	\bar{\phi}X=\phi X+\omega(X)\zeta,
\end{align}
for any $X$ tangent to $M$. Here, $\phi X$ is the tangential part (with respect to $\zeta$) of $\bar{\phi}X$ to $M$. It is easy to see that $\phi$ and $\omega$ are tensor fields of type $(1,1)$ and $(0,1)$, respectively, on $M$. We say that $M$ is {\it invariant} if $\omega=0$. Since $\bar{\phi}\xi$ is tangent to $M$, it follows from (\ref{bg34}) that 
\begin{align}\label{bg35}
	\bar{\phi}\xi=\phi \xi\quad \mbox{and} \quad \omega(\xi)=0.
\end{align}
Applying $\bar{\phi}$ to (\ref{bg34}), and using (\ref{p1}) and (\ref{bg1}), we have 
\begin{align}\label{bg36}
	-X+\eta(X)\zeta=\phi^{2}X+\omega(\phi X)\zeta.
\end{align}
Comparing components in (\ref{bg36}) we get 
\begin{align}
	\phi ^{2}X&=-X \quad\mbox{and}\quad \omega(\phi X)=\eta(X),\label{bg38}
\end{align}
for any $X$ tangent to $M$. Thus, the tensor $\phi$ is an {\it almost complex structure} on $M$.

\begin{proposition}\label{bg50}
	There exist no invariant inascreen lightlike hypersurface of an indefinite almost contact metric-manifold $\bar{M}$.
\end{proposition}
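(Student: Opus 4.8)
The plan is to argue by contradiction, playing the invariance hypothesis $\omega=0$ against the defining inequality $b\neq 0$ of a (proper) inascreen hypersurface. The whole mechanism is already encoded in the compatibility identity (\ref{bg38}), so the argument should be short: invariance of $M$ would force $\zeta$ to be $\bar{g}$-orthogonal to $TM$, and in particular to $\xi$, which is impossible once $b=\eta(\xi)\neq 0$.

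Concretely, I would first assume that $M$ is an invariant inascreen lightlike hypersurface, so that $\omega\equiv 0$ while $b\neq 0$ (recall that the setup (\ref{bg34})--(\ref{bg38}) is made precisely for hypersurfaces on which $\zeta$ is nowhere tangent, i.e. $b\neq 0$). By (\ref{bg35}) we already know $\bar{\phi}\xi=\phi\xi$ is tangent to $M$. I would then substitute $X=\xi$ into the second relation of (\ref{bg38}), namely $\omega(\phi X)=\eta(X)$, obtaining $\eta(\xi)=\omega(\phi\xi)$. Since $\omega\equiv 0$, the right-hand side vanishes, so $\eta(\xi)=0$.

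Finally, recalling that $b=\eta(\xi)$, this gives $b=0$, contradicting the standing assumption $b\neq 0$; hence no invariant inascreen hypersurface can exist. Equivalently, since $\phi^{2}=-I$ by (\ref{bg38}) makes $\phi$ invertible, $\omega=0$ forces $\eta=\omega\circ\phi=0$ on all of $TM$, and evaluating on $\xi$ again yields $b=0$. I do not expect any genuine analytic obstacle here: the result is essentially immediate from the structure equations. The only conceptual point to state carefully is that the construction of $\phi$ and $\omega$ relies on the splitting $T\bar{M}|_{M}=TM\oplus\mathbb{R}\zeta$, which holds exactly because $\zeta$ has nonzero transversal component $b\neq 0$; thus the contradiction we reach is precisely with that defining condition, confirming that the obstruction to invariance is intrinsic to the (proper) inascreen class.
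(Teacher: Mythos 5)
Your proof is correct and follows essentially the same route as the paper's: both arguments read off $\eta=\omega\circ\phi$ from the second relation in (\ref{bg38}), conclude $\eta\equiv 0$ on $TM$ when $\omega=0$, and evaluate at $X=\xi$ to obtain $b=\eta(\xi)=0$, contradicting the standing assumption $b\neq 0$ of Section \ref{parasff}. Your closing observation that the very construction of $(\phi,\omega)$ in (\ref{bg34}) presupposes $\zeta$ nowhere tangent (so the contradiction lands exactly on the defining condition) is a nice clarification, but it does not change the argument.
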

\begin{proof}
	$M$ is invariant whenever $\omega=0$. It follows from (\ref{bg38}) that $\eta(X)=0$, for any $X$ tangent to $M$. Taking $X=\xi$ in the last relation, we get $0=\eta(\xi)=b$, which is a contradiction to $b\ne 0$.
\end{proof}

\noindent By a direct calculation, while using (\ref{p2}), (\ref{bg2}) and (\ref{bg34}), we derive 
\begin{align}
	g(\phi X,\phi Y)&=g(X,Y)-\eta(X)\eta(Y)+\omega(X)\omega(Y),\label{bg40}\\ 
	g(\phi X,Y)+\omega(X)\eta(Y)&=-g(X,\phi Y)-\omega(Y)\eta(X),\label{bg41}
\end{align}
for any $X$ and $Y$ tangent to $M$.

\begin{proposition}
	There exit no proper inascreen lightlike hypersurface of an indefinite almost contact metric-manifold such that; 
	\begin{enumerate}
		\item $(g, \phi)$ is an almost Hermitian structure on $M$;
		\item $\phi$ is skew-symmetric with respect to $g$,
	\end{enumerate}
	for any $X$ and $Y$ tangent to $M$.
\end{proposition}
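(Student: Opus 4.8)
The plan is to treat the two conditions separately. In each case I extract an algebraic identity by comparing the desired compatibility condition with the structure equations (\ref{bg40}) and (\ref{bg41}), and then specialize to $Y=\xi$ to exploit the two defining features of a proper inascreen hypersurface that are already available: $\eta(\xi)=b\neq 0$, and $\omega(\xi)=0$ (the latter recorded in (\ref{bg35})).

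For part (1), I first note that an almost Hermitian structure $(g,\phi)$ requires two things: that $\phi$ be an almost complex structure, and that $g$ be Hermitian, i.e. $g(\phi X,\phi Y)=g(X,Y)$ for all tangent $X,Y$. The almost-complex part is already supplied by (\ref{bg38}), so only the metric compatibility can be obstructed. Comparing $g(\phi X,\phi Y)=g(X,Y)$ with (\ref{bg40}) forces the identity
\[
\omega(X)\,\omega(Y)=\eta(X)\,\eta(Y)
\]
for all $X,Y$ tangent to $M$. Setting $Y=\xi$ and using $\omega(\xi)=0$ together with $\eta(\xi)=b$ yields $b\,\eta(X)=0$; since $b\neq 0$ in a proper inascreen hypersurface, this gives $\eta\equiv 0$. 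But then $\eta(\xi)=b=0$, contradicting $b\neq 0$.

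For part (2), if $\phi$ is skew-symmetric, i.e. $g(\phi X,Y)=-g(X,\phi Y)$, then subtracting this relation from (\ref{bg41}) cancels the metric terms and leaves
\[
\omega(X)\,\eta(Y)+\omega(Y)\,\eta(X)=0
\]
for all $X,Y$. Putting $Y=\xi$ and again invoking $\omega(\xi)=0$ and $\eta(\xi)=b\neq 0$ gives $b\,\omega(X)=0$, hence $\omega\equiv 0$. This says precisely that $M$ is invariant, which contradicts Proposition \ref{bg50}, where it is shown that no invariant inascreen lightlike hypersurface exists.

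The mechanism is identical in both cases and presents no real obstacle: each compatibility condition collapses the mixed metric terms in (\ref{bg40})--(\ref{bg41}) into a pure quadratic identity in $\omega$ and $\eta$, and testing against $\xi$ annihilates $\omega$ while leaving the nonzero factor $b$ on the $\eta$ side. The only points requiring a little care are bookkeeping ones: recognizing that the almost-complex half of the Hermitian requirement is automatic from (\ref{bg38}) so that it is genuinely the metric compatibility that fails in (1), and, in (2), phrasing the conclusion $\omega\equiv 0$ as invariance so that Proposition \ref{bg50} can be cited directly rather than rederiving the contradiction from $b\neq 0$.
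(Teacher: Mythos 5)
Your proof is correct and follows essentially the same route as the paper: extract the identities $\eta(X)\eta(Y)=\omega(X)\omega(Y)$ and $\omega(X)\eta(Y)=-\omega(Y)\eta(X)$ from (\ref{bg40})--(\ref{bg41}), test against $\xi$ using $\omega(\xi)=0$ and $\eta(\xi)=b\neq 0$, and in part (2) cite Proposition \ref{bg50}. The only (inessential) difference is in part (1), where the paper sets $X=Y=\xi$ to get $b^{2}=0$ directly, while you set only $Y=\xi$ and reach the same contradiction via $\eta\equiv 0$.
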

\begin{proof}
	Suppose that $(g, \phi)$ is an almost Hermitian structure on $M$. Then $g$ must satisfy the relation $g(\phi X,\phi Y)=g(X,Y)$, for any $X$ and $Y$ tangent to $M$. In view of  (\ref{bg40}), this implies that 
	\begin{align}\label{c19}
		\eta(X)\eta(Y)=\omega(X)\omega(Y),
	\end{align}
	for any $X$ and $Y$ tangent to $M$. With $X=Y=\xi$ in (\ref{c19}), and considering relation  (\ref{bg35}), we get 
	\begin{align}\nonumber
		b^{2}=\eta(\xi)\eta(\xi)=\omega(\xi)\omega(\xi)=0,	
	\end{align}
	which is impossible for a proper inascreen lightlike hypersurface. Next, suppose that $\phi$ is skew-symmetric with respect to $g$. Then relation (\ref{bg41}) implies that 
	\begin{align}\label{c18}
		\omega(X)\eta(Y)=-\omega(Y)\eta(X),
	\end{align}
	for any $X$ and $Y$ tangent to $M$. Taking $Y=\xi$ in (\ref{c18}) and considering (\ref{bg35}), we have $\omega(X)b=0$. Since $b\ne 0$, we get $\omega=0$ meaning that $M$ is invariant. However, this is not possible by Proposition \ref{bg50}.
\end{proof}
\noindent Let us set $\tilde{g}=g+\omega\otimes \omega$. This metric $\tilde{g}$ is degenerate on $M$ since $\tilde{g}(X,\xi)=0$ and $\tilde{g}(X,\phi\xi)=0$, for every $X$ tangent to $M$. Moreover, using (\ref{bg38}) and (\ref{bg40}) we have 
\begin{align}\label{bg52}
	\tilde{g}(\phi X,\phi Y)&=g(\phi X,\phi Y)+\omega(\phi X)\omega(\phi Y)\nonumber\\
	&=g(X,Y)-\eta(X)\eta(Y)+\omega(X)\omega(Y)+\omega(\phi X)\omega(\phi Y)\nonumber\\
	&=g(X,Y)+\omega(X)\omega(Y)\nonumber\\
	&=\tilde{g}(X,Y).
	\end{align}
Therefore, in view of (\ref{bg52}), we have the following.
\begin{theorem}\label{c30}
	Every proper inascreen lightlike hypersurface of an indefinite almos contact metric-manifold admits an almost Hermitian structure $(\tilde{g}, \phi)$.
\end{theorem}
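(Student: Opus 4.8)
The plan is to verify that the pair $(\tilde{g}, \phi)$ satisfies the two defining conditions of an almost Hermitian structure: first, that $\phi$ is an almost complex structure, and second, that $\tilde{g}$ is Hermitian with respect to $\phi$, i.e. that $\phi$ is an isometry of $\tilde{g}$. Both ingredients have essentially been assembled in the preceding computations, so the proof is a matter of collecting them.

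For the first condition, I would invoke relation (\ref{bg38}), already established by applying $\bar{\phi}$ to the decomposition (\ref{bg34}) and comparing the tangential and $\zeta$-components: this yields $\phi^{2}X=-X$ for every $X$ tangent to $M$, so $\phi^{2}=-I$ and $\phi$ is an almost complex structure on $M$. This is compatible with the dimension bound $\dim M\ge 4$ noted earlier, since $\phi^{2}=-I$ forces the relevant fibres to be even-dimensional. For the second condition, the computation is exactly (\ref{bg52}): I would expand $\tilde{g}(\phi X,\phi Y)=g(\phi X,\phi Y)+\omega(\phi X)\omega(\phi Y)$, substitute the identity (\ref{bg40}) for $g(\phi X,\phi Y)$ and the relation $\omega(\phi X)=\eta(X)$ from (\ref{bg38}). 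The contributions $-\eta(X)\eta(Y)$ and $\omega(\phi X)\omega(\phi Y)=\eta(X)\eta(Y)$ cancel, leaving $g(X,Y)+\omega(X)\omega(Y)=\tilde{g}(X,Y)$. This gives $\tilde{g}(\phi X,\phi Y)=\tilde{g}(X,Y)$, which is precisely the compatibility of $\tilde{g}$ with $\phi$.

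The main subtlety I anticipate lies not in the algebra, which is routine, but in the correct reading of the statement: $\tilde{g}$ was shown to be degenerate on $M$, since it annihilates both $\xi$ and $\phi\xi$. One must therefore be clear that the content of the theorem is the pair of identities $\phi^{2}=-I$ and $\tilde{g}(\phi X,\phi Y)=\tilde{g}(X,Y)$, which together constitute the almost Hermitian relations in the present (possibly degenerate) sense. I would accordingly present the proof as the direct verification of these two identities, concluding from (\ref{bg38}) and (\ref{bg52}) that $(\tilde{g},\phi)$ is an almost Hermitian structure on every proper inascreen lightlike hypersurface.
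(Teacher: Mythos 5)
Your proof is correct and follows essentially the same route as the paper: the identity $\phi^{2}=-I$ from (\ref{bg38}) together with the computation (\ref{bg52}), which expands $\tilde{g}(\phi X,\phi Y)$ via (\ref{bg40}) and $\omega(\phi X)=\eta(X)$ to obtain $\tilde{g}(\phi X,\phi Y)=\tilde{g}(X,Y)$. Your cautionary remark about the degeneracy of $\tilde{g}$ likewise mirrors the paper's own observation, stated just before the theorem, that $\tilde{g}(X,\xi)=\tilde{g}(X,\phi\xi)=0$.
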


\end{document}